\documentclass[12pt]{article}
\usepackage{amssymb,amsmath,amsthm,amsfonts,enumerate, bbm, mathdots}

\usepackage{latexsym}
\usepackage{amscd}
\usepackage{stmaryrd}
\usepackage{color}
\usepackage[all]{xypic}
\usepackage{epsfig}
\usepackage{graphics}
\usepackage{ifthen}
\usepackage{varioref}
\usepackage{rotating}
\usepackage{extarrows}
\usepackage{cite}
\usepackage{mathrsfs}

\numberwithin{equation}{section}

\theoremstyle{plain}
\newtheorem{thm}{Theorem}[section]

\newtheorem{lem}[thm]{Lemma}

\newtheorem{rem}[thm]{Remark}

\newmuskip\pFqmuskip
\newcommand*\pFq[6][8]{%
 \begingroup % only local assignments
 \pFqmuskip=#1mu\relax
 \mathchardef\normalcomma=\mathcode`,
 \mathcode`\,=\string"8000
 \begingroup\lccode`\~=`\,
 \lowercase{\endgroup\let~}\pFqcomma
 {}_{#2}F_{#3}{\left(\genfrac..{0pt}{}{#4}{#5};#6\right)}%
 \endgroup
}
\newcommand{\pFqcomma}{{\normalcomma}\mskip\pFqmuskip}

\makeatletter
\newenvironment{proofof}[1]{\par
 \pushQED{\qed}%
 \normalfont \topsep6\p@\@plus6\p@\relax
 \trivlist
 \item[\hskip\labelsep
    \bfseries
  Proof of #1\@addpunct{.}]\ignorespaces
}{%
 \popQED\endtrivlist\@endpefalse
}
\makeatother

\definecolor{dg}{rgb}{0.0625,0.64,0.0625}
\usepackage[%dvipdfm,
      pdfstartview=FitH,
      CJKbookmarks=true,
      bookmarksnumbered=true,
      bookmarksopen=true,
      colorlinks,
      pdfborder=001,
      linkcolor=blue,
      anchorcolor=green,
      citecolor=red
      ]{hyperref}

\definecolor{deepmaroon}{RGB}{100,0,20}   % 深暗红
\definecolor{deepnavy}{RGB}{0,30,80}    % 藏青深蓝
\definecolor{deepforest}{RGB}{0,60,30}   % 森林深绿
\definecolor{deeppurple}{RGB}{128,0,255}   % 暗紫
\definecolor{deepteal}{RGB}{0,70,70}    % 深湖蓝
\definecolor{deepolive}{RGB}{60,65,30}   % 暗橄榄绿
\definecolor{darkredheavy}{RGB}{120,0,0}   % 浓深红
\definecolor{lightgreen}{RGB}{0,255,255}  % 纯浓藏青
\definecolor{darkgreenheavy}{RGB}{0,80,0}  % 浓墨绿
\definecolor{darkpurpleheavy}{RGB}{90,0,120} % 浓深紫
\definecolor{darktealheavy}{RGB}{0,90,90}  % 浓深青
\definecolor{darkorangeheavy}{RGB}{160,60,0} % 深橘棕
\definecolor{matcha}{RGB}{130,190,140}
\definecolor{emeraldDeep}{RGB}{0,115,90}
\definecolor{grassBright}{RGB}{20,180,60}

\newfont{\scyr}{wncyr10 scaled 550}

\def\proof{\noindent {\bf Proof.\;}}

\allowdisplaybreaks

\newcommand{\A}{\mathcal{A}}
\newcommand{\F}{\mathbb{F}}

\newcommand{\Z}{\mathbb{Z}}
\newcommand{\card}{\#}

\begin{document}

\title{A proof of a weighted sum conjecture for finite multiple zeta values of level two}
\date{~}
\author{{Zhonghua Li${}^{a,}$\thanks{Email: zhonghua\_li@tongji.edu.cn},\quad{Zhenlu Wang${}^{b,}$\thanks{Corresponding author. Email: zhenluwang@tzc.edu.cn}\quad and\quad{Lihui Zhang${}^{a,}$}\thanks{Email: 2433952@tongji.edu.cn}}}\\[1mm]\small a. School of Mathematical Sciences,\\ \small Key Laboratory of Intelligent Computing and Applications (Tongji University), \\ \small Ministry of Education, \small Tongji University, Shanghai 200092, China\\
\small b. School of Artificial Intelligence, \\ \small Taizhou University, Taizhou 318000, Zhejiang, China}

\maketitle

\begin{abstract}
M. Kaneko, T. Murakami and A. Yoshihara introduced finite multiple zeta values of level two and conjectured a weighted sum formula for indices whose components belong to $\{1,2\}$. In this paper, we use generating functions and linear recurrence relations to prove the conjecture. More precisely, we reduce the problem to a polynomial identity and solve the resulting second-order difference equation by identifying its specialized solutions with $_4F_3$ hypergeometric polynomials of Racah-type. 
\end{abstract}

{\small
{\bf Keywords} finite multiple zeta values; weighted sum formula; generalized hypergeometric function; difference equation; Racah polynomial

{\bf 2020 Mathematics Subject Classification} 11M32, 11A07, 33C20, 33C45
}

%%---------------------------------------------------------------------------
%%------------------------Content-------------------------------------------
%%----------------------------------------------------------------------------

\section{Introduction}
A tuple of positive integers $\boldsymbol{k}=(k_1,\ldots,k_r)$ is called an index. For an index $\boldsymbol{k}=(k_1,\ldots,k_r)$, the multiple
zeta value (MZV) is defined by
\begin{align*}
 \zeta(\boldsymbol{k})
 =\sum_{1\leq m_1<\cdots<m_r}
  \frac{1}{m_1^{k_1}\cdots m_r^{k_r}},
\end{align*}
with the usual convergence condition $k_r\geq 2$. These values were independently and systematically studied by Hoffman \cite{Hoffman1992} and Zagier \cite{Zagier1994} in the early 1990s and have since played an important role in both mathematics and theoretical physics. Multiple zeta values satisfy rich algebraic relations arising from their series and iterated-integral representations, including the shuffle and harmonic products, regularized double-shuffle relations, and derivation relations \cite{Hoffman1997,IharaKanekoZagier2006}. Among the classical relations for multiple zeta values are the sum formula and its weighted generalizations \cite{Granville1997,OhnoZudilin2008,GuoXie2009}. Generating functions provide an effective tool for studying such identities, as illustrated by the Ohno--Zagier formula \cite{OhnoZagier2001}.

The modular aspects of multiple zeta values were independently studied by Hoffman \cite{Hoffman2015} and Zhao \cite{Zhao2008} through finite multiple harmonic sums. This led naturally to the study of finite multiple zeta values in the adèle-like ring $\mathcal{A}$ introduced by Kontsevich and subsequently used by Kaneko and Zagier \cite{KanekoZagier2000}. Let $\mathcal P$ be the set of primes and set
\begin{align*}
 \A=\prod_{p\in\mathcal P}\Z/p\Z\Big/\bigoplus_{p\in\mathcal P}\Z/p\Z.
\end{align*}
Then, for an index $\boldsymbol{k}=(k_1,\ldots,k_r)$, the finite multiple zeta value (FMZV) is defined by
\begin{align*}
 \zeta_{\A}(\boldsymbol{k})
 =\left(
 \sum_{1\leq m_1<\cdots<m_r<p}
 \frac{1}{m_1^{k_1}\cdots m_r^{k_r}}\pmod p
 \right)_{p}\in\A.
\end{align*}

The framework of Kaneko and Zagier reveals striking parallels between finite and classical multiple zeta values \cite{Kaneko2019,Zhao2016}. This connection is further reflected in the Kaneko--Zagier conjecture, which relates finite multiple zeta values to classical multiple zeta values modulo $\zeta(2)$. Although the conjecture remains open, many parallel relations are known on both sides, including finite analogues of sum and weighted sum formulas \cite{SaitoWakabayashi2015,HiroseMuraharaSaito2019}.

Recently, level-two variants arise by imposing parity restrictions on summation variables. Important classical examples include Hoffman's multiple $t$-values \cite{Hoffman2019} and Kaneko--Tsumura's multiple $T$-values \cite{KanekoTsumura2020}. For an index $\boldsymbol{k}=(k_1,\ldots,k_r)$, Kaneko, Murakami and Yoshihara \cite{KanekoMurakamiYoshihara2023} introduced the finite multiple zeta values of level two (level-two FMZVs) by
\begin{align*}
   \zeta_{\A}^{(2)}(\boldsymbol{k})
 =\left(
 \sum_{1\leq m_1<\cdots<m_r<p/2}
 \frac{1}{m_1^{k_1}\cdots m_r^{k_r}}\pmod p
 \right)_p\in\A.
\end{align*}
Unlike ordinary FMZVs, the summation here is truncated at $p/2$ rather than at $p$. In particular, the level-two FMZVs may also be viewed as finite analogues of multiple $t$-values, up to a constant multiple. Their paper establishes parity results and several sum
formulas and ends with the weighted sum conjecture that is the subject of this paper. More recently, finite analogues of multiple $T$-values were studied by Zhao \cite{ZhaoAxioms2024,ZhaoFoundations2024}, and level-two FMZVs were further generalized within the framework of finite multiple mixed values, with arbitrary parity restrictions on the summation\cite{LiWang2026,ZhaoFoundations2024}.

Let $p$ be an odd prime and set $N=(p-1)/2$. Let $\F_p=\mathbb{Z}/p\mathbb{Z}$ denote the finite field with $p$ elements. Our main result is the following theorem.
\begin{thm}\label{thm:main-poly}
In the polynomial ring $\F_p[x,z]$, we have 
\begin{align}\label{eq:main-poly}
\sum_{r=0}^{N}\sum_{1\le m_1<\cdots<m_r\le N}\prod_{j=1}^{r}\left(\frac{x}{m_j}+\frac{2(-1)^jz}{m_j^2}\right)=\prod_{m=1}^N\left(1+\frac{x}{m}+\frac{z}{m^2}\right),
\end{align}
where the inner sum for $r=0$ consists of the empty tuple and the corresponding empty product is understood to be $1$.
\end{thm}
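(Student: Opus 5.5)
The plan is to encode the left-hand side of \eqref{eq:main-poly} by a two-state linear recurrence and reduce the claim to an identity of polynomials in an auxiliary integer parameter $n$, which is specialized to $n=N$ only at the end. The point of the specialization is that $2N+1=p\equiv 0 \pmod p$.

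\textbf{Step 1: two-state recurrence.} For $0\le n\le N$, let $E_n$ and $O_n$ denote the partial sums of the left-hand side of \eqref{eq:main-poly} over tuples with $m_r\le n$, with $r$ even and $r$ odd respectively. Appending $m_r=n$ as the $r$-th entry gives the coupled first-order system
\begin{align*}
E_n&=E_{n-1}+\Big(\frac{x}{n}+\frac{2z}{n^2}\Big)O_{n-1},\\
O_n&=O_{n-1}+\Big(\frac{x}{n}-\frac{2z}{n^2}\Big)E_{n-1},
\end{align*}
with $E_0=1$ and $O_0=0$. The left-hand side of \eqref{eq:main-poly} is then $E_N+O_N$.

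Clearing denominators, set $\tilde E_n=n!^2E_n$ and $\tilde O_n=n!^2O_n$; these are polynomials with integer coefficients in $x$, $z$ and $n$. Eliminating one unknown gives a second-order linear difference equation in $n$ with polynomial coefficients for $S_n=\tilde E_n+\tilde O_n$ (or for a suitable twisted combination). The right-hand side of \eqref{eq:main-poly} is $\prod_{m\le N}(m^2+xm+z)/N!^2$. Note that over $\mathbb{Q}$ the two sides are not equal, so the prime must enter. It does so through the relation $n=-1/2$ modulo $p$.

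\textbf{Step 2: closed form.} Following the structure suggested by the recurrence, I would specialize $z$ or factor $m^2+xm+z=(m+a)(m+b)$, so that $x=a+b$ and $z=ab$. Then I would guess, by computing small $n$, that $S_n$ is a terminating hypergeometric sum. The natural shape is a balanced ${}_4F_3$ of Racah type in the variable $n$, with parameters of the form $-n$, $n+1$, $a$, $b$. The guess would be verified by checking that it satisfies the same second-order difference equation (via the standard contiguous relations of Racah polynomials) and the same two initial values.

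\textbf{Step 3: specialization at $n=N$.} Set $n=N=(p-1)/2$ and reduce modulo $p$. Here $n+1\equiv -n$ and $2n+1\equiv 0$, so the ${}_4F_3$ becomes very-well-poised or degenerate. I then expect a Saalsch\"utz/Whipple-type summation, or simply the symmetry $m\leftrightarrow p-m$, to collapse it to $\prod_{m=1}^N(m+a)(m+b)$ modulo $p$. This is exactly $N!^2$ times the right-hand side.

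Because both sides are polynomials in $a$ and $b$ over $\F_p$, the identity extends to all $x,z$. The comparison has to be made carefully for coefficients of degree up to $2N$, since the map $(a,b)\mapsto(a+b,ab)$ is surjective only over an algebraic closure. That surjectivity is nonetheless enough for a polynomial identity.

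\textbf{Main obstacle.} The hard part is Step 2: finding the correct hypergeometric closed form, with the right normalization and parameter choices, that solves the second-order recurrence. Step 3 must then be checked carefully so that the reduction at $2n+1\equiv0$ really yields a summable series. I would first compute $S_n$ for $n\le 4$ symbolically and match the results against Racah polynomials $R_n(\lambda(t);\alpha,\beta,\gamma,\delta)$, with $t$ tied to $a$ and $b$.
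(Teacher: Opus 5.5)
Your strategy can be made to work, but as written the proposal has a genuine gap: the two steps that carry all the content are only announced. Step~2 gives no closed form and no verification. The mechanism you expect in Step~3 is not the one that operates: no Saalsch\"utz or Whipple summation is involved. Nor can a ${}_4F_3$ simply be reduced mod $p$ at $n=N$, because its top denominator $(\tfrac32)_N$ contains $(2N+1)/2=p/2$, so the series is not $p$-integral there. What is missing is an explicit solution with an overall factor $2n+1$, whose top term cancels exactly this pole.

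That solution does exist for generic $z$. Put $x=a+b$, $z=ab$, $L_j=j^2+(a-b)j-2ab$, and normalize $u_n=S_n/\prod_{j\le n}L_j$ as in the paper. Then $L_{n+1}+R_n-A_n=-2b(1+2a)$. The computation of Lemma~\ref{lem:tri-action} only uses $L_n=n^2-\tau n+\pi$ and $R_n=n^2+\tau n+\pi$, here with $\tau=b-a$ and $\pi=-2ab$. So the coefficient ratio acquires numerator factors $(k+1+a)(k+\tfrac12-b)$. The series now terminates through $(-n)_k$ rather than $(1-s)_k$. Matching the two initial values gives, in $\mathbb{Q}[a,b]$ and for all $n\ge0$,
\[
S_n=(2n+1)\sum_{k=0}^{n}\frac{(1+a)_k(\tfrac12-b)_k(-n)_k(n+1)_k}{(\tfrac32)_k\,k!}\prod_{j=k+1}^{n}L_j .
\]
This is a balanced Racah-type ${}_4F_3$, as you guessed. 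At $n=N$:
\begin{itemize}
\item Every term with $k<N$ is $p$-integral and is multiplied by $2N+1=p$, so it vanishes mod $p$.
\item The $k=N$ term equals $(-4)^N(1+a)_N(\tfrac12-b)_N$.
\item Since $\tfrac12\equiv N+1$, the substitution $m\mapsto p-m$ gives $(\tfrac12-b)_N\equiv(-1)^N\prod_{m=1}^N(m+b)$.
\item Fermat gives $4^N=2^{p-1}\equiv1$.
\end{itemize}
Hence $S_N\equiv\prod_{m=1}^N(m+a)(m+b)$, and the algebraic independence of $a+b$ and $ab$ finishes the proof.

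For comparison, the paper specializes to $a=-s$, i.e.\ $z=-s(s+x)$. There the same series stops at $k=s-1<N$, so the factor $p$ kills $g_N(x,-s(s+x))$ outright. That only produces $N$ roots. The paper must therefore separately prove that $g_N$ is monic in $z$, which needs $(-1)^{N(N+1)/2}2^N\equiv1$ (the second supplementary law), and then count roots. Keeping $a,b$ generic, as you intended, yields the whole product from the single surviving top term and needs only Fermat's little theorem. That computation, however, is exactly what your proposal leaves undone.
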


The proof of Theorem \ref{thm:main-poly} is based on a recurrence relation related to the polynomial on the left-hand side of \eqref{eq:main-poly}. By transforming the recurrence relation into a second-order difference equation of hypergeometric type, we obtain a suitable hypergeometric representation and determine the roots of the polynomial, from which the desired factorization follows.

As a consequence of Theorem \ref{thm:main-poly}, we prove the conjecture of Kaneko--Murakami--Yoshihara \cite[Conjecture~3.8]{KanekoMurakamiYoshihara2023}.
\begin{thm}\label{thm:KMY}
For integers $r,a$ with $r\ge1$ and $0\le a\le r$, we have
\begin{align}\label{eq:KMY-conj}
 \sum_{\substack{k_i\in\{1,2\}\\\#\{i:k_i=2\}=a}}
 \left(2^a(-1)^{\#\{i:\ i\text{ odd},\ k_i=2\}}-1
 \right)\zeta_{\A}^{(2)}(k_1,\ldots,k_r)=0.
\end{align}
\end{thm}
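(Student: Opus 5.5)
The plan is to obtain \eqref{eq:KMY-conj} from Theorem~\ref{thm:main-poly} by comparing coefficients of monomials $x^{r-a}z^{a}$ on the two sides of \eqref{eq:main-poly}, separately for each odd prime $p$. The single prime $p=2$ does not matter, since an element of $\A$ is determined by its components for all but finitely many primes.

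Fix an odd prime $p$ and $N=(p-1)/2$. Summing over $m_1<\cdots<m_r<p/2$ is the same as summing over $m_1<\cdots<m_r\le N$.

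\emph{Left-hand side.} For fixed $r$ and $m_1<\cdots<m_r$, expand the product $\prod_{j=1}^r\bigl(\frac{x}{m_j}+\frac{2(-1)^jz}{m_j^2}\bigr)$ by choosing, for each $j$, either the $x$-term or the $z$-term. Record the choice as $k_j=1$ or $k_j=2$ respectively. The choice $(k_1,\ldots,k_r)$ with exactly $a$ entries equal to $2$ then gives the monomial
\[
2^a(-1)^{\sum_{j:\,k_j=2}j}\,\frac{x^{r-a}z^a}{m_1^{k_1}\cdots m_r^{k_r}}.
\]
Since $(-1)^{j}=-1$ exactly when $j$ is odd, the sign equals $(-1)^{\#\{j:\ j\text{ odd},\ k_j=2\}}$. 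Each factor contributes exactly one power of $x$ or of $z$, so every monomial coming from the $r$-th summand has total degree $r$ in $(x,z)$. Hence the coefficient of $x^{r-a}z^a$ on the left comes only from the summand with index $r$, and equals
\[
\sum_{\substack{k_i\in\{1,2\}\\ \#\{i:k_i=2\}=a}} 2^a(-1)^{\#\{i:\ i\text{ odd},\ k_i=2\}}\sum_{1\le m_1<\cdots<m_r\le N}\frac{1}{m_1^{k_1}\cdots m_r^{k_r}} .
\]

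\emph{Right-hand side.} Expand $\prod_{m=1}^N(1+\frac{x}{m}+\frac{z}{m^2})$ by choosing from each factor the term $1$, $x/m$ or $z/m^2$. The factors where a non-constant term is chosen form an increasing sequence $m_1<\cdots<m_r$. Labelling each of them by $k_j=1$ (for $x/m_j$) or $k_j=2$ (for $z/m_j^2$) gives a bijection between expansion terms and pairs $\bigl((k_j),(m_j)\bigr)$. Therefore the coefficient of $x^{r-a}z^a$ is
\[
\sum_{\substack{k_i\in\{1,2\}\\ \#\{i:k_i=2\}=a}}\ \sum_{1\le m_1<\cdots<m_r\le N}\frac{1}{m_1^{k_1}\cdots m_r^{k_r}},
\]
with every coefficient equal to $1$.

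\emph{Conclusion.} Equating these coefficients in $\F_p[x,z]$, which Theorem~\ref{thm:main-poly} permits, shows that the $p$-component of the left side of \eqref{eq:KMY-conj} vanishes. If $r>N$, both sides of this comparison are empty sums, so the $p$-component is $0$ trivially; for such $r$ the statement is covered in any case. Since the $p$-component vanishes for every odd prime $p$, the element \eqref{eq:KMY-conj} is $0$ in $\A$.

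All the real difficulty is in Theorem~\ref{thm:main-poly}, which we assume. The only point needing care here is the bookkeeping: the sign $(-1)^j$ must be indexed by the \emph{position} $j$ in the tuple, not by the value $m_j$, so that it matches the parity condition ``$i$ odd'' in \eqref{eq:KMY-conj}.
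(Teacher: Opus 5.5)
Your proposal is correct and follows the paper's proof: for each odd prime $p$ you compare the coefficients of $x^{r-a}z^a$ on the two sides of \eqref{eq:main-poly}, which is exactly what the paper does. Your extra bookkeeping only makes explicit details the paper leaves implicit, namely the total-degree argument that isolates the $r$-th summand, the sign indexed by position, and the remarks on $p=2$ and $r>N$.
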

Equation \eqref{eq:KMY-conj} can be regarded as a weighted sum formula for level-two FMZVs, whose weights depend on both the number of entries equal to $2$ in the index and the parity of their positions.

In the next section, we first establish several preliminary lemmas needed for the proofs of Theorems \ref{thm:main-poly} and \ref{thm:KMY}, and then prove the two main theorems using these preparatory results.

%%---------------------------%%--------------------
\section{Generating functions and the proofs}
Recall that $p$ is an odd prime and $N=(p-1)/2$. For integers $n$ with $0\le n\le N$, we define
\begin{align*}
G_n=G_n(x,z)=\sum_{r=0}^{n}\sum_{1\le m_1<\cdots<m_r\le n}\prod_{j=1}^{r}\left(\frac{x}{m_j}+\frac{2(-1)^jz}{m_j^2}\right),\label{eq:Fdef}
\end{align*}
where the term with $r=0$ is $1$.
Splitting $G_n$ according to the parity of $r$, we write $G_n=E_n+O_n$, where 
\begin{align*}
E_n=E_n(x,z)=\sum_{\substack{r=0\\r:even}}^{n}\sum_{1\le m_1<\cdots<m_r\le n}\prod_{j=1}^{r}\left(\frac{x}{m_j}+\frac{2(-1)^jz}{m_j^2}\right)
\end{align*}
and 
\begin{align*}
O_n=O_n(x,z)=\sum_{\substack{r=0\\r:odd}}^{n}\sum_{1\le m_1<\cdots<m_r\le n}\prod_{j=1}^{r}\left(\frac{x}{m_j}+\frac{2(-1)^jz}{m_j^2}\right).
\end{align*}
Since $E_0=1$ and $O_0=0$, we have $G_0=1$. It is easy tho find that 
\begin{equation}
\begin{pmatrix}E_n\\ O_n\end{pmatrix}
=\begin{pmatrix}
1 & \frac{x}{n}+\frac{2z}{n^2}\\[1mm]
\frac{x}{n}-\frac{2z}{n^2} & 1
\end{pmatrix}
\begin{pmatrix}E_{n-1}\\ O_{n-1}\end{pmatrix}.\label{eq:EO-matrix}
\end{equation}

Define 
\begin{align*}
  g_n=g_n(x,z)=(n!)^2G_n(x,z).
\end{align*}

We first establish the following recurrence relation for $g_n$.
\begin{lem}\label{lem:g-rec}
  For $1\le n\le N$, we have
\begin{align}
  g_{n+1}
 =(2n^2+2n+1+x)g_n
 -(n^4-n^2x^2+4z^2)g_{n-1}\label{eq:g-recurrence}
\end{align}
with initial values $g_0=1$ and $g_1=1+x-2z$. Moreover, $g_N$ is a monic polynomial of degree $N$ in $z$ over $\F_p(x)$.
\end{lem}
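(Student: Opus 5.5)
The plan is to get the recurrence by eliminating $E_n,O_n$ from the first-order system \eqref{eq:EO-matrix}, and then to read off the top $z$-coefficient of $g_N$ directly from the definition of $G_N$.

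\textbf{Deriving the recurrence.} Write $a_n=\frac{x}{n}+\frac{2z}{n^2}$, $b_n=\frac{x}{n}-\frac{2z}{n^2}$ and $D_n=1-a_nb_n$. Here $n^4D_n=n^4-n^2x^2+4z^2$, which is exactly the coefficient appearing in \eqref{eq:g-recurrence}. All indices involved satisfy $1\le n\le N+1<p$, so every $n$ is invertible in $\F_p$ and the matrices in \eqref{eq:EO-matrix} make sense up to $n+1=N+1$.

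I will express $G_{n+1}$, $G_n$ and $D_nG_{n-1}$ as linear forms in $(E_n,O_n)$:
\begin{itemize}
\item applying \eqref{eq:EO-matrix} at $n+1$ gives $G_{n+1}=(1+b_{n+1})E_n+(1+a_{n+1})O_n$;
\item by definition, $G_n=E_n+O_n$;
\item inverting the matrix at $n$, whose inverse is $D_n^{-1}\begin{pmatrix}1&-a_n\\-b_n&1\end{pmatrix}$, gives $D_nG_{n-1}=(1-b_n)E_n+(1-a_n)O_n$.
\end{itemize}

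Next I look for constants $\alpha,\beta$ with $G_{n+1}=\alpha G_n+\beta D_nG_{n-1}$ by matching the coefficients of $E_n$ and $O_n$. Subtracting the two coefficient equations gives $b_{n+1}-a_{n+1}=\beta(a_n-b_n)$, hence $\beta=-n^2/(n+1)^2$. Substituting back, $(n+1)^2\alpha=(n+1)^2+x(n+1)+2z+n^2-xn-2z=2n^2+2n+1+x$, so $\alpha$ is free of $z$.

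With these values, both coefficient identities can be checked directly as identities of rational functions; this avoids any division by $z$. Multiplying the resulting relation by $((n+1)!)^2$ turns it into \eqref{eq:g-recurrence}. The initial values are immediate: $g_0=G_0=1$ and $g_1=G_1=1+a_1=1+x-2z$, since the $j=1$ factor carries the sign $(-1)^1$.

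\textbf{Degree and leading coefficient.} By induction on \eqref{eq:g-recurrence}, $g_n$ is a polynomial of degree at most $n$ in $z$. Its top coefficient can be read off from the definition: $z^n$ occurs in $G_n$ only from the $r=n$ term, where necessarily $m_j=j$ for every $j$. That term contributes $\prod_{j=1}^n 2(-1)^j/j^2$, so the coefficient of $z^n$ in $g_n$ is $2^n(-1)^{n(n+1)/2}$. The recurrence gives the same answer as a check, via $c_{n+1}=-4c_{n-1}$.

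\textbf{Main obstacle: monicity at $n=N$.} The leading coefficient is not $1$ for general $n$, so the key step is to show it equals $1$ in $\F_p$ when $n=N$.
\begin{itemize}
\item By Euler's criterion, $2^N\equiv\left(\frac{2}{p}\right)=(-1)^{(p^2-1)/8}\pmod p$.
\item Since $N(N+1)/2=(p-1)(p+1)/8$, we have $(-1)^{N(N+1)/2}=(-1)^{(p^2-1)/8}$.
\end{itemize}
The two signs therefore cancel, and the leading coefficient $2^N(-1)^{N(N+1)/2}$ equals $1$. This shows that $g_N$ is monic of degree exactly $N$ in $z$ over $\F_p(x)$.
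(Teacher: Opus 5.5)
Your proof is correct and follows essentially the paper's route: you eliminate the auxiliary component of the first-order system \eqref{eq:EO-matrix}, and for monicity you use the same Euler's-criterion cancellation $2^N(-1)^{N(N+1)/2}=1$. The differences are minor: you stay in $(E_n,O_n)$ coordinates with one forward and one inverse step where the paper passes to $(g_n,h_n)$ and eliminates $h_{n-1}$, you read the leading coefficient off the definition rather than inducting on the recurrence, and you have one slip, since with your notation $a_1=x+2z$ the initial value should read $g_1=1+b_1=1+x-2z$, not $1+a_1$.
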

\proof
The initial values $g_0=1$ and $g_1=1+x-2z$ follow directly from the definition. Set $h_n=(n!)^2(E_n-O_n)$. Since $g_n=(n!)^2(E_n+O_n)$, we have
\begin{equation*}
\begin{pmatrix}g_n\\ h_n\end{pmatrix}
=(n!)^2\begin{pmatrix}
1 & 1\\[1mm]
1 & -1
\end{pmatrix}
\begin{pmatrix}E_{n}\\ O_{n}\end{pmatrix}.
\end{equation*}
Using \eqref{eq:EO-matrix}, we obtain
\begin{equation}\label{eq:gh-recurrence}
\begin{pmatrix}g_n\\ h_n\end{pmatrix}
=\begin{pmatrix}
n(n+x) & -2z\\[1mm]
2z & n(n-x)
\end{pmatrix}
\begin{pmatrix}g_{n-1}\\ h_{n-1}\end{pmatrix}.
\end{equation}
Taking the first component gives
\begin{align}
g_n=n(n+x)g_{n-1}-2zh_{n-1}.\label{eq:g_n}
\end{align}
Replacing $n$ by $n+1$ in \eqref{eq:gh-recurrence} and then applying \eqref{eq:gh-recurrence} once more, we obtain
\begin{align}
  g_{n+1}&=\left(n(n+1)(n+x)(n+1+x)-4z^2\right)g_{n-1}\notag\\&\qquad-2z\left((n+1)(n+1+x)+n(n-x)\right)h_{n-1}.\label{eq:g_{n+1}}
\end{align}
Eliminating $h_{n-1}$ from \eqref{eq:g_n} and \eqref{eq:g_{n+1}}, we get the recurrence relation \eqref{eq:g-recurrence}.

It remains to prove the last assertion. The recurrence relation \eqref{eq:g-recurrence}, together with $g_0=1$ and $g_1=1+x-2z$, shows inductively that $\deg_z g_n=n$ and the coefficient of $z^n$ in $g_n$ is
\begin{align*}
c_n=(-1)^{n(n+1)/2}2^n.
\end{align*}
For $N=(p-1)/2$, Euler's criterion gives
\begin{align*}
  2^N\equiv (-1)^{(p^2-1)/8}=(-1)^{N(N+1)/2}\pmod p.
\end{align*}
Therefore, we have
\begin{align*}
c_N=(-1)^{N(N+1)/2}2^N\equiv 1\pmod p.
\end{align*}
Hence $g_N$ is a monic polynomial of degree $N$ in $z$ over $\F_p(x)$, as desired.
\qed

We next show that the polynomial $g_N$ in $z$ over $\F_p(x)$ vanishes at
\begin{align*}
  -s(s+x),\quad 1\le s\le N.
\end{align*}
More precisely, we have the following theorem.
\begin{thm}\label{thm:g-roots}
For each integer $s \in\{1,2,\ldots,N\}$, we have
\begin{align*}
g_N(x,-s(s+x))=0.
\end{align*}
\end{thm}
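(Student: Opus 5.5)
The plan is to specialize $z=-s(s+x)$ in the recurrence of Lemma~\ref{lem:g-rec}, solve the specialized recurrence in closed form over $\mathbb{Q}(x)$ as a terminating hypergeometric sum, and only then reduce modulo $p$ at $n=N$. Fix $s$ and put $u_n=g_n(x,-s(s+x))$. Since $4z^2=4s^2(s+x)^2$, the recurrence becomes
\begin{align*}
u_{n+1}=(2n^2+2n+1+x)\,u_n-\bigl(n^4-n^2x^2+4s^2(s+x)^2\bigr)u_{n-1},
\end{align*}
with $u_0=1$ and $u_1=1+x+2s(s+x)$. The same recurrence, with the same initial values, makes sense in $\mathbb{Z}[x]$ for every $n\ge0$ and reduces to the one in $\F_p[x]$ for $n\le N$. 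It therefore suffices to find a closed form for $u_n$ over $\mathbb{Q}(x)$ whose reduction at $n=N$ is visibly $0$ modulo $p$.

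The first step is to factor the coefficient $n^4-n^2x^2+4s^2(s+x)^2$ into four linear factors in $n$. I would try to write it as a product of the shape $(n-\alpha)(n+\alpha)(n-\beta)(n+\beta)$, or a product of two quadratics, with $\alpha,\beta$ depending on $s$ and $x$. This factorization is exactly what allows one to normalize
\begin{align*}
u_n=\Bigl(\prod_{k} (\text{linear factors in } n)\Bigr)\cdot w_n,
\end{align*}
so that $w_n$ satisfies the three-term recurrence in the degree variable of Racah polynomials,
\begin{align*}
w_n=R_n(\lambda(y);\alpha,\beta,\gamma,\delta)=\pFq{4}{3}{-n,\,n+\alpha+\beta+1,\,-y,\,y+\gamma+\delta+1}{\alpha+1,\,\beta+\delta+1,\,\gamma+1}{1}.
\end{align*}
The natural guesses are that $y$ is tied to $s$, so that the sum terminates after $\min(n,s)$ terms, and that $x$ enters through the remaining parameters.

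I would determine $\alpha,\beta,\gamma,\delta$ by matching the coefficients of the Racah recurrence
\begin{align*}
-\lambda(y)\,w_n=A_n w_{n+1}-(A_n+C_n)w_n+C_n w_{n-1}
\end{align*}
against the specialized recurrence after the normalization. I would then check the two initial values $n=0,1$ directly. Once this identity for $u_n$ holds over $\mathbb{Q}(x)$, I would set $n=N$ and reduce modulo $p$. The decisive observation is that $-N\equiv N+1\equiv \tfrac12 \pmod p$. Consequently the upper parameters $-n$ and $n+\alpha+\beta+1$ become related mod $p$ (for $\alpha+\beta=0$ they coincide), and the $_4F_3$ should degenerate into a form covered by a classical summation or transformation (Whipple, Dougall, or a Saalsch\"utz-type evaluation) that yields a vanishing factor. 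Alternatively, the normalizing product may itself acquire a Pochhammer factor containing $p$, such as $(N+1\pm s)$-type terms, without a compensating $p$ in the denominator.

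I expect the last step to be the main obstacle. The mod-$p$ argument needs care because denominators such as $(\alpha+1)_k$ and $(\gamma+1)_k$ must remain $p$-adic units for $k\le s\le N$. The evaluation at $n=N$ also has to be carried out on the reduced parameters rather than by naive specialization. If the degenerate $_4F_3$ does not evaluate directly, my fallback is a reflection argument. I would use the symmetry $n\mapsto -n-1$ of the recurrence modulo $p$ (note $-N-1\equiv N$) together with the Racah duality $n\leftrightarrow y$. The aim would be to show that the terminating sum at $n=N$ equals its own negative modulo $p$, which forces it to vanish because $p$ is odd.
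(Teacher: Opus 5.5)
Your framework matches the paper's: specialize $z=-s(s+x)$, use $n^4-n^2x^2+4s^2(s+x)^2=(n^2-\rho^2)(n^2-\sigma^2)$ with $\rho+\sigma=2s+x$, $\rho\sigma=2s(s+x)$, normalize, and recognize a terminating Racah-type ${}_4F_3$ with upper parameters $-n$, $n+1$. But there is a genuine gap. The proposal never pins down the closed form, and the step you flag as decisive is placed in the wrong spot.

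\textbf{Where the argument fails.} The hypergeometric factor does \emph{not} vanish at $n=N$ in general. So no Whipple, Dougall or Saalsch\"utz evaluation of the degenerate series (upper parameters $\tfrac12,\tfrac12$ mod $p$) can supply the zero. Already for $s=1$, $f(n)=2n+1$ solves the normalized recurrence $L_{n+1}f(n+1)-(2n^2+2n+1+x)f(n)+R_nf(n-1)=0$ with the right initial values. Hence $g_n(x,-(1+x))=(2n+1)\prod_{j=1}^{n}(j-\rho)(j-\sigma)$, and the ${}_4F_3$ part is identically $1$. Your reflection fallback targets the wrong object for the same reason. A ${}_4F_3$ with upper parameters $-n$, $n+1$ is a function of $n(n+1)$, so it is invariant, not anti-invariant, under $n\mapsto -n-1$.

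\textbf{The missing idea.} The zero lives in the normalizing factor, and that factor is $2n+1=\prod_{j=1}^{n}\frac{2j+1}{2j-1}$. It does not come from the linear factors of the quartic, and it is not of type $N+1\pm s$. Matching with the degree-variable Racah recurrence forces it. Take $(\alpha,\beta,\gamma,\delta)=(-\rho,\rho,\tfrac12,-(2s+x))$ and $y=s-1$. The degeneration $\alpha+\beta=0$ gives
$A_n=\frac{(2n+3)(n+1-\rho)(n+1-\sigma)}{4(2n+1)}$, $C_n=\frac{(2n-1)(n+\rho)(n+\sigma)}{4(2n+1)}$ and $\lambda(y)=(s-1)(\tfrac12-s-x)$.
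One then checks that $(2n+1)\,d_n\,w_n$, with $d_n=\prod_{j\le n}(j-\rho)(j-\sigma)$, satisfies the specialized recurrence with the right initial values.

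\textbf{Finishing the argument.} It follows that $g_N(x,-s(s+x))=(2N+1)d_Nw_N=p\,d_Nw_N$. Here $w_N$ is $p$-integral because the sum stops at $k=s-1\le N-1$, so $(\tfrac32)_k$, $k!$ and $d_k$ are prime to $p$. Termination before $k=N$ matters, since $(\tfrac32)_N$ contains the factor $p/2$.

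\textbf{Relation to the paper.} This is the paper's $P_s(n)=(2n+1)\,{}_4F_3[1-s,\tfrac12-s-x,-n,n+1;\tfrac32,1-\rho,1-\sigma;1]$, read through Racah duality. The paper takes $n$ as the lattice variable and $s-1$ as the degree, and verifies the difference equation via the triangular action of $\mathcal D_s$ on $\phi_k(n)=(2n+1)(-1)^k(-n)_k(n+1)_k$. Your route would instead cite the known three-term recurrence in the degree variable. Once the factor $2n+1$ is in place, your reflection argument also becomes valid, applied to the odd polynomial $(2n+1)w_n$ together with $-N-1\equiv N\pmod p$.
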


The proof of Theorem \ref{thm:g-roots} requires several preparatory steps. We therefore begin by establishing two lemmas that will be used in the proof. The main idea is to transform the recurrence relation \eqref{eq:g-recurrence} into a second-order difference equation of hypergeometric type and identify its distinguished polynomial solution. This solution admits a representation in terms of a generalized hypergeometric function ${}_4F_3$ evaluated at $1$, which is closely related to the standard hypergeometric form of the Racah-type polynomials.

For fixed $s\in\{1,\ldots,N\}$, define $\rho$ and $\sigma$ by
\begin{align*}
\rho+\sigma=2s+x,
\qquad
\rho\sigma=2s(s+x).%\label{eq:rhosigma}
\end{align*}
Set
\begin{align*}
L_n&=(n-\rho)(n-\sigma)
=n^2-(2s+x)n+2s(s+x),%\label{eq:Ln}
\\
R_n&=(n+\rho)(n+\sigma)
=n^2+(2s+x)n+2s(s+x).%\label{eq:Rn}
\end{align*}
Then
\begin{align*}
L_nR_n=n^4-n^2x^2+4s^2(s+x)^2.%\label{eq:LR}
\end{align*}

Next, define
\begin{align*}
d_n=\prod_{j=1}^nL_j=(1-\rho)_n(1-\sigma)_n,%\label{eq:d}
\end{align*}
where the Pochhammer symbol $(a)_n$ is defined by
$$(a)_n=\begin{cases}
1 & \text{if}\quad n=0, \\
a(a+1)\cdots(a+n-1) & \text{if}\quad n>0.
\end{cases}$$
Since $L_j\neq 0$ in $\F_p(x)$ for $(1\le j\le N)$, we may define
\begin{align*}
u_n=u_n(x)=\frac{g_n(x,-s(s+x))}{d_n}.%\label{eq:u-norm}
\end{align*}
After specializing to $z=-s(s+x)$, the recurrence relation
\eqref{eq:g-recurrence} becomes
\begin{align}
L_{n+1}u_{n+1}-A_nu_n+R_nu_{n-1}=0,\label{eq:urec}
\end{align}
where $A_n=2n^2+2n+1+x$.

Now define the second-order difference operator $\mathcal{D}_s$ by
\begin{align}
(\mathcal{D}_sf)(n)=L_{n+1}\bigl(f(n+1)-f(n)\bigr)-R_n\bigl(f(n)-f(n-1)\bigr).\label{eq:D}
\end{align}
A direct calculation gives
\begin{align*}
L_{n+1}+R_n-A_n=2(2s-1)(s+x).%\label{eq:kappa}
\end{align*}
Set
\begin{align*}
\kappa_s=2(2s-1)(s+x).
\end{align*}
Then the recurrence relation \eqref{eq:urec} for $u_n$  can be written as
\begin{align}
  \mathcal{D}_su=-\kappa_su.\label{eq:Drec}
\end{align}

\begin{rem}
The normalization that transforms \eqref{eq:g-recurrence} into \eqref{eq:Drec} may be viewed as a multiplicative similarity transformation of the underlying second-order difference operator. Such transformations are standard in the theory of difference equations and discrete orthogonal polynomials, see \cite{OdakeSasaki2008,Odake2017}. 
\end{rem}

Define
\begin{align}
\phi_k(n)=(2n+1)(-1)^k(-n)_k(n+1)_k\quad\text{for}\quad k\ge0.
\label{eq:phi}
\end{align}
It is easy to find that 
 \begin{align*}
  \phi_k(-n-1)=-\phi_k(n),\quad \phi_0(0)=1 \quad\text{and}\quad\phi_k(0)=0\quad(k\ge1).
 \end{align*}
Moreover, the following identities hold:
\begin{align}
&(2n+1)(n-k+1)\phi_k(n+1)=(2n+3)(n+k+1)\phi_k(n),\label{eq:phi-forward-ratio}\\
&(2n+1)(n+k)\phi_k(n-1)=(2n-1)(n-k)\phi_k(n),\label{eq:phi-backward-ratio}
\end{align}
and
\begin{align}
&\phi_k(n)=(n-k+1)(n+k)\phi_{k-1}(n).\label{eq:phi-lower}
\end{align}

\begin{rem}
Consider the quadratic lattice $\Lambda(n)=n(n+1)$. Then we can rewrite $\phi_k(n)$ as $\phi_k(n)=(2n+1)\prod_{j=0}^{k-1}\left(\Lambda(n)-\Lambda(j)\right)$. Thus, up to the prefactor (2n+1), the family $\{\phi_k(n)\}_{k\ge0}$ forms the natural Newton-type basis associated with the quadratic lattice $\Lambda(n)=n(n+1)$. Such quadratic lattices arise naturally in the theory of Racah and dual Hahn polynomials, see \cite{KLS2010}. In particular, this choice of basis is compatible with the Racah-type polynomial structure that appears below.
\end{rem}

\begin{lem}\label{lem:tri-action}
  For $k\ge0$, we have
\begin{align*}
\mathcal D_s\phi_k
=\lambda_k\phi_k+\mu_k\phi_{k-1},
%\label{eq:triangular}
\end{align*}
where we adopt the convention $\phi_{-1}=0$, and
\begin{align*}
\lambda_k
&=-2(2k+1)(x+2s-k-1),\quad\mu_k=2k(2k+1)(k-\rho)(k-\sigma).
\end{align*}
\end{lem}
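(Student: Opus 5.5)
We verify the identity pointwise in $n$ by expressing both difference terms in $(\mathcal D_s\phi_k)(n)$ as rational multiples of $\phi_k(n)$, and then matching the result against $\lambda_k\phi_k(n)+\mu_k\phi_{k-1}(n)$. The latter, by \eqref{eq:phi-lower}, is a multiple of $\phi_{k-1}(n)$.

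\emph{Step 1.} Rewrite each difference as a multiple of $\phi_k(n)$. From \eqref{eq:phi-forward-ratio},
\begin{align*}
\phi_k(n+1)-\phi_k(n)=\frac{(2n+3)(n+k+1)-(2n+1)(n-k+1)}{(2n+1)(n-k+1)}\phi_k(n),
\end{align*}
and from \eqref{eq:phi-backward-ratio},
\begin{align*}
\phi_k(n)-\phi_k(n-1)=\frac{(2n+1)(n+k)-(2n-1)(n-k)}{(2n+1)(n+k)}\phi_k(n).
\end{align*}
The numerators simplify to $2(2k+1)(n+1)$ and $2(2k+1)n$ respectively, which gives
\begin{align*}
(\mathcal D_s\phi_k)(n)=2(2k+1)\left[\frac{(n+1)L_{n+1}}{n-k+1}-\frac{nR_n}{n+k}\right]\frac{\phi_k(n)}{2n+1}.
\end{align*}

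\emph{Step 2.} Clear denominators using \eqref{eq:phi-lower}. Since $\phi_k(n)=(n-k+1)(n+k)\phi_{k-1}(n)$, we get
\begin{align*}
(\mathcal D_s\phi_k)(n)=\frac{2(2k+1)}{2n+1}\Bigl[(n+1)(n+k)L_{n+1}-n(n-k+1)R_n\Bigr]\phi_{k-1}(n).
\end{align*}
The target is $\bigl[\lambda_k(n-k+1)(n+k)+\mu_k\bigr]\phi_{k-1}(n)$. It therefore suffices to prove the polynomial identity in $n$
\begin{align*}
(n+1)(n+k)L_{n+1}-n(n-k+1)R_n=\frac{2n+1}{2(2k+1)}\Bigl[\lambda_k(n-k+1)(n+k)+\mu_k\Bigr].
\end{align*}

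\emph{Step 3.} Verify this identity. Writing $\beta=\rho+\sigma=2s+x$ and $\gamma=\rho\sigma$, the left side is a cubic in $n$. Its $n^4$ terms cancel, and it must be divisible by $2n+1$.

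The cleanest check uses the symmetry $n\mapsto -n-1$. Under this map, $L_{n+1}=(n+1-\rho)(n+1-\sigma)$ becomes $(n+\rho)(n+\sigma)=R_n$, $(n+1)(n+k)$ becomes $n(n-k+1)$, and conversely. Hence the left side is odd under $n\mapsto-n-1$, so it vanishes at $n=-1/2$ and is of the form $(2n+1)\cdot Q(n)$ with $Q$ a quadratic that is even under the same map, i.e.\ a polynomial in $\Lambda(n)=n(n+1)$.

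It remains to match two coefficients of $Q$, a linear function of $\Lambda$, against $\frac{1}{2(2k+1)}\bigl[\lambda_k(\Lambda-k^2+k)+\mu_k\bigr]$, using $(n-k+1)(n+k)=\Lambda-k(k-1)$.
\begin{itemize}
\item The coefficient of $\Lambda$ comes from the leading $n^3$ coefficient of the left side. It is $2k+1-\beta$ minus half-correction terms, to be compared with $\lambda_k/(2(2k+1))=-(\beta-k-1)$.
\item The constant term can be read off at a convenient point. Evaluating at $n=k-1$ (where $\Lambda=k(k-1)$) kills the $\lambda_k$ term on the right and leaves $\mu_k/(2(2k+1))=k(k-\rho)(k-\sigma)$. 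On the left, the second term vanishes and the first gives $k(2k-1)L_k=k(2k-1)(k-\rho)(k-\sigma)$, which must equal $(2k-1)\,k(k-\rho)(k-\sigma)$. It does.
\end{itemize}

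\emph{Step 4.} The boundary cases are handled the same way. For $k=0$, the factor $(n-k+1)$ in Step 1 is simply $(n+1)$ and the convention $\phi_{-1}=0$ matches $\mu_0=0$. Some values of $n$, such as $n=k-1$ or $n=-k$, make the Step 1 ratios singular. There I would argue via polynomial identity in $n$: both sides of the claimed equation are polynomials in $n$, and the identity holds for infinitely many $n$, so it holds identically.

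The main obstacle is the bookkeeping in Step 3, specifically confirming the leading coefficient $-(\beta-k-1)$. I would do this by expanding the $n^3$ terms directly.
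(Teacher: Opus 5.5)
Your proposal is correct and follows the paper's route: the ratio identities reduce the lemma to the cubic $A=(n+1)(n+k)L_{n+1}-n(n-k+1)R_n$, which must equal $(2n+1)\bigl[(k+1-\beta)\Lambda(n)+kL_k\bigr]$. The paper gets this by direct expansion, whereas you use the oddness of $A$ under $n\mapsto -n-1$ plus the evaluation at $n=k-1$, a minor variation. The computation you deferred does work out: the $n^3$ coefficient of $A$ is $(k+3-\beta)-(\beta+1-k)=2(k+1-\beta)$, so the coefficient of $\Lambda$ is $k+1-\beta=\lambda_k/(2(2k+1))$, not ``$2k+1-\beta$ minus corrections.''
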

\proof
Using \eqref{eq:D}, \eqref{eq:phi-forward-ratio} and \eqref{eq:phi-backward-ratio}, we obtain
\begin{align*}
\mathcal D_s\phi_k=\frac{2(2k+1)}{(2n+1)(n-k+1)(n+k)}A\,\phi_k,
%\label{eq:Dphi-first}
\end{align*}
where
\begin{align*}
A=(n+1)(n+k)L_{n+1}-n(n-k+1)R_n.
\end{align*}
Recall that
\begin{align*}
L_n=n^2-(2s+x)n+2s(s+x)
\quad\text{and}\quad
R_n=n^2+(2s+x)n+2s(s+x). 
\end{align*}
Then we have
\begin{align*}
A&=(n+1)(n+k)\bigl((n+1)^2-(2s+x)(n+1)+2s(s+x)\bigr)\\
&\qquad-n(n-k+1)\bigl(n^2+(2s+x)n+2s(s+x)\bigr)\\
%&=(2n+1)\left\{(k+1)(n+n^2)+k-(k+n+n^2)(2s+x)+2ks(s+x)\right\}\\
&=(2n+1)\left[(k+1-2s-x)(n^2+n)+k\left(1-2s-x+2s(s+x)\right)\right].
\end{align*}
Since
\begin{align*}
n^2+n=(n-k+1)(n+k)+k(k-1)
\end{align*}
and 
\begin{align*}
(k-1)(k+1-2s-x)+1-2s-x+2s(s+x)=L_k,
\end{align*}
we get
\begin{align*}
\mathcal D_s\phi_k
=\left[2(2k+1)(k+1-2s-x)+\frac{2k(2k+1)L_k}{(n-k+1)(n+k)}\right]\phi_k.
\end{align*}
By using \eqref{eq:phi-lower}, we complete the proof.
\qed

For a positive integer $m$, the generalized hypergeometric function $_{m+1}F_m$ is defined by
\begin{align*}
{}_{m+1}F_m
\left[
\begin{matrix}
a_1,\ldots,a_{m+1}\\
b_1,\ldots,b_m
\end{matrix};z
\right]=
\sum_{k=0}^{\infty}
\frac{(a_1)_k\cdots(a_{m+1})_k}
{(b_1)_k\cdots(b_m)_k}
\frac{z^k}{k!}.
\end{align*}
Here, $a_1,\ldots,a_{m+1}$, $b_1,\ldots,b_m$ are complex constants with none of $b_i$ is zero or a negative integer. The series converges absolutely for $|z| < 1$.

Define
\begin{align}
P_s(n)
=(2n+1)
{}_4F_3\!\left[
\begin{matrix}
1-s,\ \frac12-s-x,\ -n,\ n+1\\[1mm]
\frac32,\ 1-\rho,\ 1-\sigma
\end{matrix};1
\right].\label{eq:Ps}
\end{align}
The above ${}_4F_3$ series terminates at $k=s-1$. Hence
\begin{align}
P_s(n)&=(2n+1)\sum_{k=0}^{s-1}\frac{(1-s)_k(\frac12-s-x)_k(-n)_k(n+1)_k}{(\frac32)_k(1-\rho)_k(1-\sigma)_k\,k!}.\label{eq:Psfinite}
\end{align}

It follows immediately from the definition of $P_s(n)$ that
\begin{align}
P_s(0)=1=u_0.\label{eq:P(0)}
\end{align}
For $n=1$, we obtain
\begin{align*}
P_s(1)&=3-\frac{4(1-s)(\frac12-s-x)}{L_1}=\frac{1+x+2s(s+x)}{L_1}.%\label{eq:P1a}
\end{align*}
On the other hand, under the specialization $z=-s(s+x)$,
Lemma~\ref{lem:g-rec} gives
\begin{align*}
g_1=1+x+2s(s+x).
\end{align*}
Since $d_1=L_1$ and $u_1=g_1/d_1$, it follows that
\begin{align}
P_s(1)=\frac{1+x+2s(s+x)}{L_1}=u_1.\label{eq:P(1)}
\end{align}
\begin{rem}
It is worth noting that the hypergeometric function ${}_{4}F_3$ appearing in \eqref{eq:Ps} is an explicit specialization of a Racah polynomial. Indeed, in the standard notation for Racah polynomials \cite{KLS2010}, we have
\begin{align*}
\frac{P_s(n)}{2n+1}=R_{s-1}\left(n(n+1);\frac12,\,-(2s+x)-\rho,\,\rho\right).%\label{eq:Racah-identification}
\end{align*}
\end{rem}

The following lemma shows that $P_s(n)$ provides the solution of the difference equation \eqref{eq:urec} satisfied by $u_n$.
\begin{lem}\label{lem:4F3}
We have 
\begin{align}\label{eq:DP}
\mathcal D_sP_s=-\kappa_sP_s.
\end{align}
Moreover, $u_n=P_s(n)$ for $0\le n\le N$.
\end{lem}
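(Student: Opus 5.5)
Write $P_s(n)=\sum_{k=0}^{s-1}c_k\phi_k(n)$ with
\begin{align*}
c_k=\frac{(1-s)_k(\frac12-s-x)_k}{(\frac32)_k(1-\rho)_k(1-\sigma)_k\,k!}.
\end{align*}
This is exactly \eqref{eq:Psfinite}, because $(2n+1)(-n)_k(n+1)_k=(-1)^k\phi_k(n)$; so $c_k$ should carry an extra factor $(-1)^k$, and I will keep track of that sign. By linearity and Lemma~\ref{lem:tri-action},
\begin{align*}
\mathcal D_sP_s=\sum_k\bigl(c_k\lambda_k+c_{k+1}\mu_{k+1}\bigr)\phi_k .
\end{align*}
The top term $k=s-1$ has no partner, since $c_s=0$ because of the factor $(1-s)_s$. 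Hence \eqref{eq:DP} reduces to two checks.

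First, for $0\le k\le s-2$, the coefficient identity
\begin{align*}
c_{k+1}\mu_{k+1}=(-\kappa_s-\lambda_k)\,c_k .
\end{align*}
Second, that $\lambda_{s-1}=-\kappa_s$, which also covers the last term.

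The second check is immediate: $\lambda_{s-1}=-2(2s-1)(x+2s-s)=-2(2s-1)(s+x)=-\kappa_s$.

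For the first check, compute the ratio
\begin{align*}
\frac{c_{k+1}}{c_k}=\pm\frac{(k+1-s)(k+\frac12-s-x)}{(k+\frac32)(k+1-\rho)(k+1-\sigma)(k+1)}.
\end{align*}
On the other side, $\mu_{k+1}=2(k+1)(2k+3)(k+1-\rho)(k+1-\sigma)$, so the factors $(k+1-\rho)(k+1-\sigma)$ and $(k+1)$ cancel. What remains is the requirement
\begin{align*}
\mp 4(k+1-s)(k+\tfrac12-s-x)=-\kappa_s-\lambda_k .
\end{align*}
Expanding the right side gives $2(2k+1)(x+2s-k-1)-2(2s-1)(s+x)$. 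This is a quadratic in $k$ that vanishes at $k=s-1$ and at $k=s+x-\tfrac12$, with leading coefficient $-4$. It therefore equals $-4(k+1-s)(k+\tfrac12-s-x)$, which matches once the sign $(-1)^k$ is absorbed correctly. The only bookkeeping obstacle is this sign convention between $\phi_k$ and the Pochhammer product. The divisions by $(1-\rho)_k(1-\sigma)_k=d_k$ and by $(\frac32)_k$, $k!$ are legitimate in $\F_p(x)$ for $k\le s-1<p/2$, since $L_j\ne0$.

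For the second statement, \eqref{eq:Drec} and \eqref{eq:DP} say that $u$ and $P_s$ satisfy the same three-term recurrence \eqref{eq:urec}. That recurrence can be written as
\begin{align*}
L_{n+1}f(n+1)=A_nf(n)-R_nf(n-1).
\end{align*}
By \eqref{eq:P(0)} and \eqref{eq:P(1)} they agree at $n=0,1$. For $1\le n\le N-1$ we have $L_{n+1}\neq0$ in $\F_p(x)$, since $n+1\le N$, so the recurrence determines $f(n+1)$ uniquely. Induction then gives $u_n=P_s(n)$ for all $0\le n\le N$.
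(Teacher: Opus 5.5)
Your proposal is correct and follows the paper's proof essentially step for step. You expand $P_s$ in the basis $\phi_k$, use Lemma~\ref{lem:tri-action} to reduce \eqref{eq:DP} to the coefficient identity $c_k\lambda_k+c_{k+1}\mu_{k+1}=\lambda_{s-1}c_k$ with $\lambda_{s-1}=-\kappa_s$, and then get $u_n=P_s(n)$ from the shared recurrence and initial values; your explicit remark that $L_{n+1}\neq0$ is what makes this uniqueness step valid is a useful addition. Tidy up the sign bookkeeping. Since $c_k$ carries $(-1)^k$, the ratio $c_{k+1}/c_k$ has a minus sign, so $c_{k+1}\mu_{k+1}=-4(k+1-s)(k+\frac12-s-x)\,c_k$ directly, and the $\mp$ in your displayed requirement should match the $\pm$ in the ratio rather than be opposite to it.
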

\proof
By \eqref{eq:phi} and \eqref{eq:Psfinite}, we have
\begin{align*}
P_s(n)&=\sum_{k=0}^{s-1}c_k\phi_k(n),%\label{eq:Psfinite}
\end{align*}
where
\begin{align*}
  c_k=\frac{(-1)^k(1-s)_k(\frac12-s-x)_k}{(\frac32)_k(1-\rho)_k(1-\sigma)_k\,k!}.
\end{align*}
For $0\le k\le s-2$, we have
\begin{align*}
\frac{c_{k+1}}{c_k}=-\frac{(k+1-s)(k+\frac12-s-x)}
{(k+1)(k+\frac32)(k+1-\rho)(k+1-\sigma)}=\frac{\lambda_{s-1}-\lambda_k}{\mu_{k+1}}.%\label{eq:ckratio}
\end{align*}
Equivalently,
\begin{align}
c_k\lambda_k+c_{k+1}\mu_{k+1}=c_k\lambda_{s-1}.\label{eq:ratio-match}
\end{align}
Applying $\mathcal D_s$ to $P_s(n)=\sum_{k=0}^{s-1}c_k\phi_k(n)$ and using Lemma \ref{lem:tri-action} , we obtain
\begin{align*}
\mathcal D_sP_s&=\sum\limits_{k=0}^{s-1}c_k\left(\lambda_k\phi_k+\mu_k\phi_{k-1}\right)=\sum\limits_{k=0}^{s-1}c_k\lambda_k\phi_k+\sum\limits_{k=0}^{s-2}c_{k+1}\mu_{k+1}\phi_{k}.%\label{eq:Ps-eigen}
\end{align*}
By \eqref{eq:ratio-match}, we have
\begin{align*}
\mathcal D_sP_s=\lambda_{s-1}\sum_{k=0}^{s-1}c_k\phi_k=\lambda_{s-1}P_s.
\end{align*}
Since $$\lambda_{s-1}=-2(2s-1)(s+x)=-\kappa_s,$$ 
we obtain \eqref{eq:DP}.

Both $u_n$ and $P_s(n)$ satisfy the same recurrence relation and agree at the initial values \eqref{eq:P(0)} and \eqref{eq:P(1)}. Hence, $u_n=P_s(N)$ for $0\le n\le N$. This completes the proof.
\qed

Now we are ready to prove Theorem \ref{thm:g-roots}.
\proofof{Theorem \ref{thm:g-roots}}
Fix $s\in\{1,2,\ldots,N\}$. By Lemma \ref{lem:4F3}, we have
\begin{align*}
u_N=P_s(N)
=(2N+1)
{}_4F_3\!\left[
\begin{matrix}
1-s,\ \frac12-s-x,\ -N,\ N+1\\[1mm]
\frac32,\ 1-\rho,\ 1-\sigma
\end{matrix};1
\right].
\end{align*}
Recall that $2N+1=p$, hence we have $u_N=0$ in $\F_p(x)$. Therefore, $g_N=d_Nu_N=0$. This proves the desired result.
\qed

Finally, we prove Theorem \ref{thm:main-poly} and Theorem \ref{thm:KMY}.
\proofof{Theoren \ref{thm:main-poly}}
Recall that
\begin{align*}
g_N(x,z)=(N!)^2\sum_{r=0}^{N}\sum_{1\le m_1<\cdots<m_r\le N}\prod_{j=1}^{r}\left(\frac{x}{m_j}+\frac{2(-1)^jz}{m_j^2}\right).
\end{align*}
By Lemma \ref{lem:g-rec}, $g_N(x,z)$ is a monic polynomial of degree $N$ in $z$. It is clear that $-s(s+x)(1\le s\le N)$ are pairwise distinct in $\F_p(x)$. Indeed, for $v\neq s$,
\begin{align*}
-s(s+x)+v(v+x)=(v-s)(v+s+x)\neq 0.
\end{align*}  
By Theorem \ref{thm:g-roots}, $g_N(x,z)$ has $N$ distinct roots. Therefore, 
\begin{align*}
g_N(x,z)=\prod\limits_{s=1}^{N}(z+s(s+x)).
\end{align*}
Dividing both sides by $(N!)^2$ yields \eqref{eq:main-poly}.
\qed

\proofof{Theorem \ref{thm:KMY}}
The coefficient of $x^{r-a}z^a$ in the left-hand side of \eqref{eq:main-poly} is 
\begin{align*}
2^a\sum_{\substack{k_i\in\{1,2\}\\
\card\{i:k_i=2\}=a}}(-1)^{\card\{i:\ i\text{ odd},\ k_i=2\}}\sum_{1\le m_1<\cdots<m_r\le N}\frac{1}{m_1^{k_1}\cdots m_r^{k_r}}.%\label{eq:coefF}
\end{align*}
On the other hand, the coefficient of $x^{r-a}z^a$ in the right-hand side of \eqref{eq:main-poly} is 
\begin{align*}
\sum_{\substack{k_i\in\{1,2\}\\\card\{i:k_i=2\}=a}}\sum_{1\le m_1<\cdots<m_r\le N}\frac{1}{m_1^{k_1}\cdots m_r^{k_r}}.%\label{eq:coefG}
\end{align*}
Since $N=(p-1)/2$, comparing the coefficients of $x^{r-a}z^a$ on both sides of \eqref{eq:main-poly} yields the desired result.
\qed

\section*{Acknowledgments}
The first author is supported by the Natural Science Foundation of Shanghai (Grant No. 24ZR1469000).

\end{document}